\documentclass[oneside,english]{amsart}

\usepackage{amssymb}
\usepackage{amscd}

\numberwithin{equation}{section} 
\textwidth=15cm \textheight=21cm
\numberwithin{figure}{section} 
\oddsidemargin=5mm \evensidemargin=5mm \topmargin=-10mm
\theoremstyle{plain}
\newtheorem*{thm*}{Theorem}
\theoremstyle{plain}
\newtheorem{thm}{Theorem}[section]
\theoremstyle{definition}
\newtheorem{defn}[thm]{Definition}
\theoremstyle{plain}

\theoremstyle{plain}

\theoremstyle{plain}

\theoremstyle{remark}
\newtheorem{rem}[thm]{Remark}
\theoremstyle{remark}
\newtheorem*{acknowledgement*}{Acknowledgement}

\begin{document}

\title[Metrizable isotropic SODE and Hilbert's fourth problem]
{Metrizable isotropic second-order differential equations and Hilbert's fourth problem}

\author[Bucataru]{Ioan Bucataru} \address{Ioan Bucataru, Faculty of
  Mathematics, Alexandru Ioan Cuza University \\ Ia\c si, 
  Romania} \urladdr{http://www.math.uaic.ro/\textasciitilde{}bucataru/}

\author[Muzsnay]{Zolt\'an Muzsnay} \address{Zolt\'an Muzsnay, Institute
  of Mathematics, University of Debrecen \\Debrecen, 
  Hungary} \urladdr{http://www.math.klte.hu/\textasciitilde{}muzsnay/}

\date{\today}

\begin{abstract}
It is well known that a system of homogeneous second-order ordinary
differential equations (spray) is necessarily isotropic in order to be
metrizable by a Finsler function of scalar flag curvature. In Theorem \ref{scalar_flag} we show that the
isotropy condition, together with three other conditions on the Jacobi
endomorphism, characterize sprays that are metrizable by Finsler
functions of scalar flag curvature. The proof of Theorem
\ref{scalar_flag} provides an algorithm to construct the 
Finsler function of scalar flag curvature, in the case when a given
spray is metrizable.  One condition of Theorem \ref{scalar_flag}, regarding the
regularity of the sought after Finsler function, can be relaxed.  By
relaxing this condition, we provide examples of sprays that are
metrizable by conic pseudo-Finsler functions as well as degenerate
Finsler functions.

Hilbert's fourth problem asks to determine the Finsler functions with
rectilinear geodesics. A Finsler function that is a solution to Hilbert's
fourth problem is necessarily of constant or scalar flag curvature. Therefore,
we can use the conditions of \cite[Theorem 4.1]{BM13} and Theorem
\ref{scalar_flag} to test when the projective deformations of a flat spray,
which are isotropic, are metrizable by Finsler functions of constant or scalar
flag curvature. We show how to use the algorithms provided by the proofs of
\cite[Theorem 4.1]{BM13} and Theorem \ref{scalar_flag} to construct solutions
to Hilbert's fourth problem.
\end{abstract}

\subjclass[2000]{53C60, 58B20, 49N45, 58E30}

\keywords{isotropic sprays, Finsler metrizability, flag curvature}

\maketitle

\section{Introduction}

Second-order ordinary differential equations (SODEs) are important
mathematical objects because they have a large variety of applications in
different domains of mathematics, science and engineering, \cite{AIM93}.  A particularly
interesting class of SODE is the one which can be derived from a variational
principle.  The inverse problem of the calculus of variations (IP) consists of
characterizing variational SODEs, which means to determine whether or not a given
SODE can be described as the critical point of a functional. The most
significant contribution to this problem is the famous paper of Douglas
\cite{Douglas41} in which, using Riquier's theory, he classifies variational
differential equations with two degrees of freedom. Generalizing his results
to higher dimensional cases is a hard problem because the Euler-Lagrange
system is an extremely over-determined partial differential system (PDE), so
in general it has no solution. The integrability conditions of the
Euler-Lagrange PDE can be very complex and can change case by case,
\cite{AT92, BM11, GM00, Krupkova97, MFLMR90, STP02}.  Therefore,
it seems to be impossible to obtain a complete classification of variational
SODE in the $n$-dimensional case, unless we restrict the problem to particular
classes of sprays with special curvature properties, \cite{Berwald41, BR04,
Bryant02, BM13, Crampin07, Shen03}. 

A special and very interesting problem, within the IP, is known as the Finsler
metrizability problem. Here the Lagrangian to search for is the energy
function of a Finslerian or a Riemannian metric, \cite{KS85, Muzsnay06,
  SV02}. Of course, in this problem the given system of SODE and the
associated spray must be homogeneous or quadratic. If the corresponding metric
exists, then the integral curves of the given SODE are the geodesic curves of
the corresponding Finslerian or Riemannian metric.  Since the obstructions to
the existence of a metric for a given SODE are essentially related to
curvature properties of the associated canonical nonlinear connection, it
seems to be reasonable to consider SODEs with special curvature properties.
Obvious candidates to investigate are Finsler structures with constant or
scalar flag curvature.  It is therefore natural to formulate the following
problem. Provide the necessary and sufficient conditions that can be used to
decide whether or not a given homogeneous system of second-order ordinary
differential equations represents the Euler-Lagrange equations of a Finsler
function of constant flag curvature or scalar flag curvature, respectively.
In \cite{BM13} we solved the first part of the problem by giving a
characterization of sprays that are metrizable by Finsler functions of
constant flag curvature.  In the present paper we consider the second part of
the problem and solve it completely by giving a coordinate free
characterization of sprays metrizable by Finsler functions of scalar flag
curvature.  Our main result can be found in Section
\ref{sec:scalar_metrizable}, where we provide the necessary and sufficient
conditions, as tensorial equations on the Jacobi endomorphism, which can be
used to decide whether or not a given homogeneous SODE represents the geodesic
equations of a Finsler function of scalar curvature.
It is known that a spray metrizable by a Finsler function of scalar flag
curvature is necessarily isotropic.  In Theorem \ref{scalar_flag} we provide
three other conditions, which together with the isotropy condition, will
characterize the class of sprays that are metrizable by Finsler functions of
scalar flag curvature.  The proof offers, in the case when the test is
affirmative, an algorithm to construct the Finsler function of scalar flag
curvature that metricizes the given spray. In all the examples we provide, we
show how to use the proposed algorithm to construct such Finsler
functions. The importance of characterizing sprays metrizable by Finsler
functions of scalar flag curvature for constructing all systems of ODEs with
vanishing Wilczynski invariants has been discussed recently in \cite{CDT12}.

In Section \ref{sec:Hilbert} we show that our results for
characterizing metrizable sprays lead to a new approach for Hilbert's
fourth problem. This problem asks to
construct and study the geometries in which the straight line segment is the
shortest connection between two points, \cite{Alvarez05}. Alternatively, one
can reformulate the problem as follows: ''given a domain $\Omega \subset
\mathbb{R}^n$, determine all (Finsler) metrics on $\Omega$ whose geodesics are
straight lines'', \cite[p.191]{Shen01}. Yet another reformulation of
the problem requires to determine projectively flat Finsler metrics,
\cite{Crampin11}. Projectively flat Finsler functions have
isotropic geodesic sprays and therefore have constant or scalar flag
curvature. Such Finsler metrics, of constant flag curvature
where studied in \cite{Shen03}. We use the conditions of 
\cite[Theorem 4.1]{BM13} and Theorem \ref{scalar_flag} to study when
the projective deformations of a flat spray are metrizable. Using
these conditions, we show how to construct 
examples which are solutions to Hilbert's fourth problem by Finsler functions of
constant, and respectively scalar flag curvature.

In Section \ref{sec:examples} we give working examples to show how to use Theorem
\ref{scalar_flag} to test whether or not some other sprays are
Finsler metrizable, and in the affirmative case how to construct the
corresponding Finsler function. By relaxing a regularity condition
of Theorem \ref{scalar_flag}, we show that we can also characterize
sprays that are metrizable by conic pseudo-, or degenerate Finsler
functions.

\section{The geometric framework for Finsler metrizability}

In this section we present the geometric setting for addressing the
Finsler metrizability problem, \cite{BM12a, KS85, Muzsnay06, Shen01,
  Szilasi03}. This geometric setting that includes connections and
curvature can be derived directly from a given homogeneous SODE using
the Fr\"olicher-Nijenhuis formalism, \cite[\S 30]{KMS93},
\cite[Chapter 2]{GM00}. 

\subsection{Spray, connections and curvature}

We consider $M$ a smooth, real and $n$-dimensional manifold. In this
work, all geometric structures are assumed to be smooth. We denote by
$C^{\infty}(M)$ the set of smooth functions on $M$, by
$\mathfrak{X}(M)$ the set of vector fields on $M$,
and by $\Lambda^k(M)$ the set of $k$-forms on $M$. 

For the manifold $M$, we consider the tangent bundle $(TM, \pi, M)$ and
$(T_0M=TM\setminus\{0\}, \pi, M)$ the tangent bundle with the zero
section removed. If $(x^i)$ are local coordinates on the base manifold
$M$, the induced coordinates on the total space $TM$ will be denoted
by $(x^i, y^i)$. 

The tangent bundle carries some canonical structures, very useful to
formulate our geometric framework. One structure is the \emph{vertical
  subbundle} $VTM=\{\xi \in TTM, (D\pi)\xi=0\}$, which induces an
integrable, $n$-dimensional distribution $V: u\in TM \to V_u=VTM\cap
T_uTM$. Locally, this distribution that we will refer to as the
\emph{vertical distribution}, is spanned by $\{\partial /\partial
y^i\}$. Two other structures, defined on $TM$, are the tangent
structure, $J$, and the Liouville vector field, $\mathbb{C}$, locally
given by 
\begin{eqnarray*}
  J=\frac{\partial}{\partial y^i}\otimes dx^i, \quad
  \mathbb{C}=y^i\frac{\partial}{\partial y^i}. \end{eqnarray*}

The main object of this work is a system of $n$ homogeneous second-order
ordinary differential equations, whose coefficients do not depend
explicitly on time, 
\begin{eqnarray}
  \frac{d^2x^i}{dt^2} + 2G^i\left(x,
    \frac{dx}{dt}\right)=0. \label{sode} \end{eqnarray}
For functions $G^i(x,y)$ we assume that they are positive
$2$-homogeneous, which means that 
$G^i(x,\lambda y)=\lambda^2 G^i(x,y)$, for all $\lambda>0$.  By
Euler's Theorem the homogeneity condition of the functions $G^i$ is
equivalent to $\mathbb{C}(G^i)=2G^i$.       

The system \eqref{sode} can be identified with a special vector field
$S\in \mathfrak{X}(T_0M)$ that satisfies the conditions
$JS=\mathbb{C}$ and $[\mathbb{C}, S]=S$. Such a vector field is called
a \emph{spray} and it is locally given by 
\begin{eqnarray}
  S=y^i\frac{\partial}{\partial x^i} - 2G^i(x,y)\frac{\partial}{\partial
    y^i}. \label{slocal} \end{eqnarray}
If we reparameterize the second-order system \eqref{sode}, by preserving the
orientation of the parameter, we obtain a new system and hence a new
spray $\tilde{S}=S-2P\mathbb{C}$, \cite{AIM93, Shen01}. The function $P\in
C^{\infty}(T_0M)$ is $1$-homogeneous, which means that it satisfies
$\mathbb{C}(P)=P$. The two sprays $S$ and $\tilde{S}$ are called
\emph{projectively related}, the function $P$ is called a
\emph{projective deformation} of the spray $S$. 

An important geometric structure that can be associated to a spray is
that of \emph{nonlinear connection} (horizontal distribution,
Ehresmann connection). A nonlinear connection is defined by an $n$-dimensional
distribution $H: u\in TM \to H_u\subset T_uTM$ that is supplementary
to the vertical distribution: $T_uTM=H_u\oplus V_u$.
It is well known that a spray $S$ induces a nonlinear connection with the
corresponding horizontal and vertical projectors given by 
\begin{eqnarray*}
  h=\frac{1}{2}\left( \operatorname{Id} - [S, J]\right), \quad
  v=\frac{1}{2}\left( \operatorname{Id} +  [S, J]\right).  
\end{eqnarray*} 
Locally, the above two projectors can be expressed as follows
\begin{eqnarray*}
  h & =& \frac{\delta}{\delta x^i} \otimes dx^i, \quad
  v=\frac{\partial}{\partial y^i} \otimes \delta y^i, \quad \textrm{ where } \\
  \frac{\delta}{\delta x^i} & = & \frac{\partial}{\partial y^i} - N^j_i(x,y)
  \frac{\partial}{\partial ^j}, \quad \delta y^i=dy^i + N^i_j(x,y) dx^j,
  \quad N^i_j(x,y)=\frac{\partial G^i}{\partial
    y^j}(x,y). \end{eqnarray*}
Alternatively, the nonlinear connection induced by a spray $S$ can be
characterized in terms of an \emph{almost complex structure}, 
\begin{eqnarray*}
  \mathbb{F}=h\circ \mathcal{L}_Sh - J= \frac{\delta}{\delta x^i}\otimes
  \delta y^i - \frac{\partial}{\partial y^i} \otimes dx^i. \label{complexstr}
\end{eqnarray*}
It is straightforward to check that $\mathbb{F}\circ J=h$ and $J\circ
\mathbb{F}=v$. 

The horizontal distribution $H$ is, in general, non-integrable. The
obstruction to its integrability is given by the \emph{curvature tensor}  
\begin{eqnarray}
  R=\frac{1}{2}[h,h]=\frac{1}{2}R^i_{jk}\frac{\partial}{\partial
    y^i}\otimes dx^j \wedge dx^k = \frac{1}{2}\left(\frac{\delta
      N^i_j}{\delta x^k} - \frac{\delta
      N^i_k}{\delta x^j} \right)\frac{\partial}{\partial
    y^i}\otimes dx^j \wedge dx^k 
  \label{curvature} \end{eqnarray}
Due to the homogeneity condition of a spray $S$, curvature
information can be obtained also from the \emph{Jacobi endomorphism}
\begin{eqnarray}
  \Phi=v\circ [S,h] = R^i_j \frac{\partial}{\partial y^i}\otimes dx^j =
  \left(2\frac{\partial G^i}{\partial x^j} - S(N^i_j) -
    N^i_kN^k_j\right) \frac{\partial}{\partial y^i}\otimes
  dx^j. \label{localphi} \end{eqnarray} 
The two curvature tensors are related by 
\begin{eqnarray}
  3R=[J, \Phi], \quad \Phi = i_SR. \label{rphi} \end{eqnarray}
As we will see in this work, important geometric information
about the given spray $S$ are encoded in the \emph{Ricci scalar},
$\rho\in  C^{\infty}(T_0M)$,  \cite{BR04}, \cite[Def. 8.1.7]{Shen01}, which is given by
\begin{eqnarray}
  (n-1)\rho = R^i_i=\operatorname{Tr}(\Phi). \label{ricr} \end{eqnarray}
\begin{defn} \label{defn:iso}
  A spray $S$ is said to be \emph{isotropic} if there exists a semi-basic $1$-form $\alpha
  \in \Lambda^1(T_0M)$ such that the Jacobi endomorphism can be
  written as follows
  \begin{eqnarray}
    \Phi = \rho J-\alpha \otimes \mathbb{C}. \label{isophi} \end{eqnarray}
\end{defn}
Due to the homogeneity condition, for isotropic sprays, the Ricci
scalar is given by $\rho=i_S\alpha$. Using formulae \eqref{rphi} and
\eqref{isophi}, it can be shown that the class of isotropic sprays can
be characterized also in terms of the curvature $R$ of the nonlinear connection,
\cite[Prop. 3.4]{BM11},
\begin{eqnarray}
  3R= \left(d_J\rho +\alpha\right)\wedge J -d_J\alpha \otimes
  \mathbb{C}. \label{isor}\end{eqnarray} 
To complete the geometric setting for studying the Finsler
metrizability problem of a spray, we will use also the \emph{Berwald
  connection}. It is a linear connection on $T_0M$, given by 
\begin{eqnarray*}
  D_XY=h[vX, hY] + v[hX, vY] + (\mathbb{F} +J)[hX, JY] + J[vX,
  (\mathbb{F} +J)], \quad \forall X,Y \in
  \mathfrak{X}(T_0M). \end{eqnarray*}
Locally, the Berwald connection is given by 
\begin{eqnarray*}
  D_{\frac{\delta}{\delta x^i}} \frac{\delta}{\delta x^j} =
  \frac{\partial N^k_i}{\partial y^j} \frac{\delta}{\delta x^k}, \quad
  D_{\frac{\delta}{\delta x^i}} \frac{\partial }{\partial y^j} = 
  \frac{\partial N^k_i}{\partial y^j} \frac{\partial}{\partial y^k}.  \\
  D_{\frac{\partial}{\partial y^i}} \frac{\delta}{\delta x^j}
  = 0, \quad
  D_{\frac{\partial}{\partial y^i}} \frac{\partial }{\partial y^j} = 
  0. \end{eqnarray*}
The Berwald connection has two curvature components. One is the Riemann
curvature tensor and it is directly related to the curvature tensor $R$ and the Jacobi endomorphism
$\Phi$. Another one is the Berwald curvature, \cite[\S 7.1, \S
8.1]{Shen01}. 

\subsection{Finsler spaces}

In this section, we briefly recall the notion of Finsler functions, as well as
some generalizations: conic pseudo-Finsler functions and degenerate Finsler
functions. The variational problem for the energy of a Finsler
function determines a spray, which is called the geodesic spray. The
Finsler metrizability problem requires to decide if a given spray
represents the geodesic spray of a Finsler function.  

\begin{defn} 
  \label{Finsler}
  A continuous function $F:TM \to \mathbb{R}$ is called a \emph{Finsler
    function} if it satisfies the following conditions:
  \begin{itemize}
  \item[i)] $F$ is smooth and strictly positive on $T_0M$, $F(x,0)=0$;
  \item[ii)] $F$ is positive homogeneous of order $1$ in the fibre
    coordinates, which means that $F(x,\lambda y)=\lambda F(x,y)$ for
    all $\lambda > 0$ and $(x,y)\in T_0M$;
  \item[iii)] The $2$-form $dd_JF^2$ is a symplectic form on $T_0M$. 
  \end{itemize}
\end{defn}
In this work we will allow for some relaxations of the above conditions,
regarding the domain of the function as well as the regularity condition iii). See
\cite[\S 1.1.2, \S 1.2.1]{AIM93}, \cite{Bryant02}, \cite{JS12} for more
discussions about the regularity conditions and their relaxation for a Finsler
function.

If the function $F$ is defined on some positive conical region $A\subset TM$
and the three conditions of Definition \ref{Finsler} are satisfied on $A\cap
T_0M$, then we call $F$ a \emph{conic pseudo-Finsler metric}. Moreover, if we
replace the regularity condition iii) by a weaker condition,
$\operatorname{rank}(dd_JF^2)\in \{1,..., 2n-1\}$ on $A\cap T_0M$, we call $F$
a \emph{degenerate Finsler metric}, \cite{JS12}.

\begin{defn} 
  \label{fmetr} 
  A spray $S$ is called Finsler metrizable if there exists a Finsler function
  $F$ such that
  \begin{eqnarray}
    i_Sdd_JF^2=-dF^2. \label{isddj}
  \end{eqnarray}
\end{defn}
We will also use the metrizability property in a broader sense by calling a
spray $S$ conic pseudo-, or degenerate Finsler metrizable if there exist a
conic pseudo-, or degenerate Finsler function $F$ such that the
equation \eqref{isddj} is satisfied. If a spray $S$ satisfies the equation \eqref{isddj}, we call it the
\emph{geodesic spray} of the (conic pseudo-, or degenerate) Finsler function
$F$. It is well known that $S$ is the geodesic spray of such function if and
only if satisfies following equation:
\begin{eqnarray}
  d_hF^2=0. 
  \label{gsf2}
\end{eqnarray}
Consider $S$ the geodesic spray of some (conic pseudo-, or degenerate)
Finsler function $F$ and let $\Phi$ be the Jacobi endomorphism.  
\begin{defn} The function $F$ is said to be of \emph{scalar flag
    curvature}  if there exists a function $\kappa \in
  C^{\infty}(T_0M)$ such that 
  \begin{eqnarray}
    \Phi = \kappa \left( F^2 J - Fd_J F\otimes
      \mathbb{C}\right). \label{fscalar} \end{eqnarray}
\end{defn}

Using formulae \eqref{isophi} and \eqref{fscalar} it follows that for a Finsler
function $F$, of scalar flag curvature $\kappa$, its geodesic spray $S$ is
isotropic, with Ricci scalar $\rho=\kappa F^2$ and the semi-basic
$1$-form $\alpha = \kappa F d_J F.$

Conversely, it can be shown that if an isotropic spray $S$ is metrizable by a
Finsler function $F$, then $F$ is necessarily of scalar flag curvature. See
\cite[Lemma 8.3.2]{Shen01}, or the first implication in \cite[Thm. 4.2]{BM13}
for an alternative proof. One can conclude the above considerations as
follows.

\begin{rem} For a Finsler function, its geodesic spray is
isotropic if and only if the Finsler function is of scalar flag
curvature.\end{rem}

\section{Sprays metrizable by Finsler functions of scalar curvature}
\label{sec:scalar_metrizable}

The problem we want to address in this paper is the following: provide the
necessary and sufficient conditions for a sprays $S$ to be metrizable by a
Finsler function of scalar flag curvature. Above discussion restricts the
class of sprays to start with to the class of isotropic sprays. Alternative
formulations of the conditions we use in the next theorem were proposed first
in \cite[Thm 7.2]{GM00}, in the analytic case, to decide when a non-flat
isotropic spray is variational, by discussing the formal integrability of an
associated partial differential operator. However, next theorem, will provide
an algorithm to construct the Finsler function that metricizes a given spray,
in the case that it is variational. Moreover, the differentiability assumption
we use in the next theorem is weaker, all geometric structure we use are
smooth, not necessarily analytic. Next theorem extends the results of Theorem
4.1 in \cite{BM13}, where we characterize sprays metrizable by Finsler
functions of constant flag curvature.

\begin{thm} 
  \label{scalar_flag}
  Consider $S$ a spray of non-vanishing Ricci scalar.  
  The spray $S$ is metrizable by a Finsler function $F$ of non-vanishing
  scalar flag curvature if and only if 
  \begin{itemize}
  \item[i)] $S$ is isotropic;
  \item[ii)] $d_J(\alpha/\rho) =0$;
  \item[iii)] $D_{hX}(\alpha/\rho) =0$, for all $X\in
    \mathfrak{X}(T_0M)$;
  \item[iv)] $d(\alpha/\rho) +  2i_{\mathbb F}\alpha/\rho \wedge
    \alpha/\rho$ is a symplectic form on $T_0M$. 
  \end{itemize}
\end{thm}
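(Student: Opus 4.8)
The plan is to prove the two implications separately, the converse yielding the announced construction. Throughout I write $\beta:=\alpha/\rho$, a well-defined semi-basic $1$-form because $\rho\neq 0$; note that $i_S\beta=1$, since for an isotropic spray $\rho=i_S\alpha$. For the necessity, let $S$ be the geodesic spray of a Finsler function $F$ of scalar flag curvature $\kappa\neq 0$. By the Remark of the previous section $S$ is isotropic, so (i) holds; comparing \eqref{isophi} with \eqref{fscalar} gives $\rho=\kappa F^{2}$ and $\alpha=\kappa F\,d_JF$, hence $\beta=d_JF/F$. Since $d_J\circ d_J=0$ (the Nijenhuis tensor $[J,J]$ vanishes), one gets $d_J\beta=-F^{-2}\,d_JF\wedge d_JF=0$, which is (ii). By \eqref{gsf2} we have $d_hF=0$; differentiating $\delta F/\delta x^{i}=0$ along $\partial/\partial y^{j}$ in the adapted frame then gives $D_{hX}(d_JF)=0$ for every $X$, and dividing by $F$ yields (iii). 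Finally, using $i_{\mathbb{F}}(d_JF)=d_vF$ (from $J\circ\mathbb{F}=v$), $d_hF=0$, and $dd_JF^{2}=2\,dF\wedge d_JF+2F\,dd_JF$, a short computation gives
\[
  d\beta+2\,i_{\mathbb{F}}\beta\wedge\beta=\frac{1}{2F^{2}}\,dd_JF^{2},
\]
which is symplectic because $dd_JF^{2}$ is and $F^{2}>0$; this is (iv).

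For the converse, assume (i)--(iv). By (ii) the semi-basic form $\beta$ is $d_J$-closed, hence (locally) $d_J$-exact; choosing a $d_J$-primitive that depends smoothly on the base point and exponentiating produces a positive function $F$ with $d_JF=F\beta$, and then $\mathbb{C}(F)=(i_S\beta)F=F$, so $F$ is positively $1$-homogeneous. The crucial step is to arrange $d_hF=0$. Put $u_k:=\delta F/\delta x^{k}$; the adapted-frame computation (now \emph{without} assuming $d_hF=0$) gives $D_{\delta/\delta x^{k}}(d_JF)=\bigl(\partial u_k/\partial y^{j}\bigr)\,dx^{j}$, whereas $d_JF=F\beta$ together with (iii) gives $D_{\delta/\delta x^{k}}(d_JF)=u_k\,\beta$. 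Comparing the two, $\partial u_k/\partial y^{j}=u_k\,\beta_j$, so each $u_k$ satisfies on every fibre the same linear first-order system as the nowhere-vanishing $F$; hence $u_k/F$ is fibrewise constant and $d_hF=F\,\pi^{*}\lambda$ for some $1$-form $\lambda$ on $M$.

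It remains to eliminate $\lambda$, and here the isotropy hypothesis re-enters in an essential way. Expanding \eqref{isor} with $\alpha=\rho\beta$ and $d_J\beta=0$, and using $d_JF=dF\circ J=F\beta$ together with $\mathbb{C}(F)=F$, one checks that $i_R\,dF=0$, i.e.\ $d_RF=0$; since $d_h\circ d_h=d_R$ (because $[h,h]=2R$, cf.\ \eqref{curvature}) and $d_h(\pi^{*}\lambda)=\pi^{*}(d\lambda)$, applying $d_h$ to $d_hF=F\,\pi^{*}\lambda$ gives $F\,\pi^{*}(d\lambda)=0$, whence $\lambda$ is closed. Replacing $F$ by $e^{-\psi}F$ with $d\psi=\lambda$ (locally) preserves both $d_JF=F\beta$ and the homogeneity, while achieving $d_hF=0$, hence $d_hF^{2}=0$: by \eqref{gsf2}, $S$ is the geodesic spray of $F$. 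Condition iii) of Definition~\ref{Finsler} for $F$ follows from the identity of the necessity part, which now reads $dd_JF^{2}=2F^{2}\bigl(d\beta+2\,i_{\mathbb{F}}\beta\wedge\beta\bigr)$: non-degenerate by (iv), and closed because exact. Lastly, isotropy and $\alpha=\rho\beta=\rho\,d_JF/F$ give
\[
  \Phi=\rho J-\alpha\otimes\mathbb{C}=\frac{\rho}{F^{2}}\bigl(F^{2}J-F\,d_JF\otimes\mathbb{C}\bigr),
\]
so $F$ has scalar flag curvature $\kappa=\rho/F^{2}$, nowhere zero since $\rho$ is. I expect the main obstacle to be this middle passage — producing, out of the algebraic-looking conditions (ii)--(iii), an honest solution of $d_hF^{2}=0$ — where the reduction uses the full force of isotropy via $d_RF=0$.
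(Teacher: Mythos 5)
Your proposal is correct and follows essentially the same route as the paper's proof: the necessity computations are identical, and in the converse you take a $d_J$-primitive via the Poincar\'e lemma for $d_J$, use condition iii) to show $d_hF/F$ is basic, use the isotropy formula \eqref{isor} to get $d_RF=0$ and hence closedness of that basic form, and then correct $F$ by a basic exponential factor --- exactly the paper's construction $F=\exp(f-a)$, merely phrased with $F=e^{f}$ from the outset. The only cosmetic difference is that you repackage the paper's observation that $\omega_i=\delta f/\delta x^i$ is fibrewise constant as the ODE argument $\partial u_k/\partial y^j=u_k\beta_j$ with $u_k=F\omega_k$.
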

\begin{proof}
  We assume that the spray $S$ is metrizable by  a Finsler function $F$ of scalar flag curvature
  $\kappa$ and we will prove that the four conditions i)-iv) are
  necessary. 

  Since the Jacobi endomorphism $\Phi$ is given by
  formula \eqref{fscalar}, as we discussed already, it follows that $S$
  is isotropic, and hence condition i) is satisfied. 

  The semi-basic $1$-form $\alpha$ and the Ricci scalar $\rho$ are given by 
  \begin{eqnarray} \alpha=\kappa F d_JF, \quad \rho=\kappa
    F^2. \label{ark} \end{eqnarray} It follows that 
  $\alpha/\rho=d_JF/F$ and therefore $d_J(\alpha/\rho)=0$, which
  means that condition ii) is satisfied. 

  Since $S$ is the geodesic spray of the Finsler function $F$, it
  follows from first formula \eqref{gsf2} that $d_hF=0$. Therefore,
  $D_{hX}F=(hX)(F)=(d_h F)(X)=0$ and $D_{hX}d_JF=0$. It follows that
  $D_{hX}(\alpha/\rho) =0$ and hence the condition iii) is also
  satisfied. 

  We check now the regularity condition iv). Using $d_hF=0$ and
  $J\circ \mathbb{F}=v$, we obtain 
  \begin{eqnarray*}
    i_{\mathbb{F}}\frac{\alpha}{\rho}=i_{\mathbb{F}}\frac{1}{F}d_JF=\frac{1}{F}d_vF
    = \frac{1}{F}dF. \label{ifar}
  \end{eqnarray*} 
  Therefore, using the regularity of the Finsler function, it follows
  that  
  \begin{eqnarray*} d\left(\frac{\alpha}{\rho}\right)+ 2i_{\mathbb F}\frac{\alpha}{\rho} \wedge
    \frac{\alpha}{\rho} = d\left(\frac{d_J F}{F}\right)  + \frac{2}{F^2}
    dF \wedge d_JF = \frac{1}{2F^2}
    dd_JF^2 \label{symplectic} \end{eqnarray*} is a
  symplectic form on $T_0M$.

  Let us prove now the sufficiency of the four conditions i)-iv).

  Consider $S$ a spray that satisfies all four conditions i)-iv). First
  condition i) says that the spray $S$ is isotropic, which means that
  its Jacobi endomorphism $\Phi$ is given by formula
  \eqref{isophi}. Next three conditions ii)-iv) refer to the semi-basic
  $1$-form $\alpha$ and the Ricci scalar $\rho$, which enter into the expression \eqref{isophi} of
  the Jacobi endomorphism $\Phi$.

  From condition ii) we have that the semi-basic $1$-form
  $\alpha/\rho$ is a $d_J$-closed $1$-form. Since the tangent
  structure $J$ is integrable, it follows that $[J,J]=0$ and hence
  $d_J^2=0$. Therefore, using a Poincar\'e-type Lemma for the
  differential operator $d_J$, it follows that, locally, $\alpha/\rho$ is a $d_J$-exact $1$-form. It follows that
  there exists a function $f$, locally defined on $T_0M$, such that 
  \begin{eqnarray}
    \frac{1}{\rho}\alpha = d_Jf=\frac{\partial f}{\partial y^i} dx^i. \label{radjf} \end{eqnarray}
  Note that this function $f$ is not unique, it is given up to an
  arbitrary basic function $a\in C^{\infty}(M)$. We will prove that using this
  function $f$ and a corresponding basic function $a$, we can construct a
  Finsler function $F=\exp(f-a)$, of scalar flag curvature, which
  metricizes the given spray $S$. 

  Using the commutation rule for $i_S$ and $d_J$, see \cite[Appendix
  A]{GM00}, we have 
  \begin{eqnarray}
    {\mathbb C}(f)= i_Sd_Jf = i_S\frac{\alpha}{\rho}=1. \label{eq3}
  \end{eqnarray}
  Using the condition ii) of the theorem, and the form \eqref{isor} of the curvature tensor $R$, we obtain
  \begin{eqnarray}
    3d_Rf= (d_J\rho+\alpha )\wedge d_Jf -
    \mathbb{C}(f)  d_J\alpha 
    = (d_J\rho +\alpha )\wedge \frac{\alpha}{\rho} - d_J\alpha =
    - \rho
    d_J\left(\frac{\alpha}{\rho}\right)=0. \label{eq5} \end{eqnarray}

  The condition iii) of the theorem can be written locally as follows
  \begin{eqnarray}
    D_{\delta/\delta x^i}\frac{\partial f}{\partial y^i} =
    \frac{\partial}{\partial y^i} \left(\frac{\delta f}{\delta x^i}\right)
    = 0, \label{dpf} \end{eqnarray}
  which means that the components $\omega_i= \delta f/\delta x^i$ are independent of
  the fibre coordinates. In other words 
  \begin{eqnarray} \omega=d_hf=\frac{\delta f}{\delta
      x^i}dx^i, \label{eq9} \end{eqnarray}
  is a basic $1$-form on $T_0M$. Using formula \eqref{eq5} we 
  \begin{eqnarray}
    0=d_Rf=d^2_hf=d_h(d_h f) = \frac{1}{2}\left(\frac{\partial
        \omega_i}{\partial x^j} - \frac{\partial
        \omega_j}{\partial x^i} \right) dx^i\wedge
    dx^j=d(d_hf). \label{eq10} \end{eqnarray}
  It follows that the basic $1$-form $d_hf \in \Lambda^1(M)$ is
  closed and hence it is locally exact. Therefore, there exists a
  function $a$, which is locally defined on $M$, such that
  \begin{eqnarray}
    d_hf=da = d_ha. \label{dhaf} \end{eqnarray}
  We will prove now that the function 
  \begin{eqnarray}
    F=\exp(f-a),\label{expf} \end{eqnarray} locally defined on
  $T_0M$, is a Finsler function of scalar flag curvature, whose geodesic
  spray is the given spray $S$. Depending on the domain of
    the two functions $f$ and $a$, the function $F$ might be a conic
    pseudo-Finsler function.

  From formula \eqref{eq3}, we have that 
  $ \mathbb{C}(F)= \exp(f-a) \mathbb{C}(f)=F$, which means
  that $F$ is $1$-homogeneous. Using formula \eqref{dhaf}, we obtain that 
  \begin{eqnarray}
    d_hF= \exp(f-a) d_h(f-a)=0. \label{dhfinsler} \end{eqnarray}

  The semi-basic $1$-form $\alpha/\rho$, which is given by formula
  \eqref{radjf}, can be expressed in terms of the function $F$, given by
  formula \eqref{expf}, as follows 
  \begin{eqnarray*}
    \frac{\alpha}{\rho}=\frac{d_JF}{F}. \label{far} \end{eqnarray*}
  We use now formula \eqref{dhfinsler} and obtain
  \begin{eqnarray} d\left(\frac{\alpha}{\rho}\right)+ 2i_{\mathbb F}\frac{\alpha}{\rho} \wedge
    \frac{\alpha}{\rho} = \frac{1}{F^2}
    dd_JF^2. \label{symplectic} \end{eqnarray} The last condition of the 
  theorem assures that $dd_JF^2$ is a symplectic form and hence $F$ is a
  Finsler function.
  Due to formula \eqref{dhfinsler}, we obtain that $S$ is the geodesic
  spray of the Finsler function $F$. 

  To complete the proof, we have to show now that $F$ has
  non-vanishing scalar flag
  curvature.  Since the Finsler function $F$ is given by formula
  \eqref{expf}, we have that $F>0$ on $T_0M$ and we may consider the
  function
  \begin{eqnarray}
    \kappa=\frac{\rho}{F^2}. \label{krf} \end{eqnarray}
  It follows that the semi-basic $1$-form $\alpha$ is given by 
  \begin{eqnarray}
    \alpha=\frac{\rho}{F}d_JF = \kappa F d_JF. \label{akf}\end{eqnarray}
  Since the Ricci scalar does not vanish, it follows that the function
  $\kappa$ has the same property. The last two formulae \eqref{krf} and \eqref{akf} show that the Jacobi
  endomorphism $\Phi$, of the geodesic spray $S$ of the Finsler function
  $F$, is given by formula \eqref{fscalar}. Therefore, the Finsler
  function $F$ has non-vanishing scalar flag curvature $\kappa$.
\end{proof}

We can replace the regularity condition $iv)$ of the Theorem
\ref{scalar_flag} by a weaker condition and require that
$\operatorname{rank}\left( d(\alpha/\rho) +  2i_{\mathbb F}\alpha/\rho \wedge
    \alpha/\rho \right) \neq 0$ on some conical region in $T_0M$. In
  this case the theorem provides a characterization for sprays
  metrizable by conic pseudo-, or degenerate Finsler function. We
  consider two examples of such sprays in Section \ref{sec:examples}.

For dimensions grater than two, the Theorem \ref{scalar_flag} does not address the Finsler metrizability
problem in its most general context. The cases that are not covered by
this theorem refer to sprays that are metrizable by Finsler functions
which do not have scalar flag curvature. 

However, in the $2$-dimensional case, the Theorem \ref{scalar_flag}
covers the Finsler metrizability problem in the most general case. This is
due to the fact that any $2$-dimensional spray is isotropic and
therefore, the Finsler metrizability problem is equivalent to
the metrizability by a Finsler function of scalar flag curvature. For the two-dimensional case, in \cite{Berwald41}, Berwald provides
necessary and sufficient conditions, in terms of the curvature scalars, such that the extremals of a
Finsler space are rectilinear.

The importance of characterizing sprays that are metrizable by Finsler
functions of scalar flag curvature was discussed recently in
\cite{CDT12} since it will allow to ''construct all systems of ODEs with vanishing Wilczynski invariants''.

\section{Hilbert's fourth problem}
\label{sec:Hilbert}

''Hilbert's fourth problem asks to construct and study the geometries
in which the straight line segment is the shortest connection between
two points", \cite{Alvarez05}. Alternatively, the problem can be
reformulated as follows: ''given a domain $\Omega \subset \mathbb{R}^n$, determine all (Finsler)
metrics on $\Omega$ whose geodesics are straight lines'',
\cite[p.191]{Shen01}. These Finsler metrics are projectively flat and can
be studied using different techniques, \cite{Crampin11, CMS13, Shen03}. All
such Finsler functions have constant or scalar flag
curvature. Therefore, we can use the conditions of \cite[Thm. 4.1]{BM13}  and Theorem
\ref{scalar_flag} to test when a projectively flat
spray is Finsler metrizable. For such sprays we use the algorithms provided in the
proofs of \cite[Thm. 4.1]{BM13}  and Theorem
\ref{scalar_flag} to construct solutions to Hilbert's fourth problem. 

We start with $S_0$, the flat spray on some domain $\Omega \subset \mathbb{R}^n$. A projective
deformation $S=S_0-2P\mathbb{C}$ leads to a new spray that is
isotropic. In the case that the spray $S$ satisfies the metrizability
tests of either \cite[Thm. 4.1]{BM13} or Theorem
\ref{scalar_flag},  then $S$ is the geodesic spray of a Finsler
function of constant or scalar flag curvature. This way we provide a
method to construct Finsler functions of constant or scalar flag
curvature with rectilinear geodesics. 

Consider a domain $\Omega\subset \mathbb{R}^n$ and let $S_0\in
\mathfrak{X}(\Omega \times \mathbb{R}^n)$ be the flat spray.
We will study now, when a projective deformation 
\begin{eqnarray} S=S_0 -
2P\mathbb{C}=y^i\frac{\partial}{\partial x^i} -
Py^i\frac{\partial}{\partial y^i}, \label{pfspray} \end{eqnarray} for
a $1$-homogeneous function $P \in C^{\infty}(\Omega \times
\mathbb{R}^n\setminus\{0\})$, leads to a metrizable 
spray $S$ by a Finsler function $F$ of constant flag curvature. Such 
Finsler function $F$ will be then a solution to Hilbert's fourth problem.

Using formulae \cite[(4.8)]{BM12a}, the
Jacobi endomorphism of the new spray $S$ is given by
\begin{eqnarray}
  \Phi = (P^2 - S_0P) J - (Pd_JP  + d_J(S_0P) - 3d_{h_0}P)\otimes
  \mathbb{C}. \label{pp0} \end{eqnarray}
It follows that the spray $S$ is isotropic, the Ricci scalar, $\rho$, and
semi-basic form $\alpha$ are given by:
\begin{eqnarray} \rho=P^2 - S_0P, \ \alpha = 
  Pd_JP  + d_J(S_0P) - 3d_{h_0}P. \label{raflat} \end{eqnarray}
From above formula it follows that 
\begin{eqnarray}
  d_J\alpha =-3d_Jd_{h_0}P= 3d_{h_0}d_JP. \label{dja0}\end{eqnarray}
Using formula \cite[(4.8)]{BM12a}, the corresponding horizontal
projectors for the two sprays $S$ and $S_0$ are related by 
\begin{eqnarray}
  h=h_0-PJ - d_JP\otimes \mathbb{C}. \label{hh0} \end{eqnarray}
We use that $\mathbb{C}(P^2-S_0P)=2 (P^2-S_0P)$ as
well as the formulae \eqref{raflat} and \eqref{hh0} to obtain
\begin{eqnarray}
  d_h\rho=d_{h_0}(P^2-S_0P) - Pd_J\rho - 2\rho d_JP. \label{dhr0} 
\end{eqnarray}
In Subsection \ref{subsec:h4cc} we will use the conditions of \cite[Thm. 4.1]{BM13} to test if
the spray $S$, given by formula \eqref{pfspray},  is metrizable by a Finsler function of constant flag
curvature. In Subsection \ref{subsec:h4sc} we will use the conditions of Theorem \ref{scalar_flag} to test if
the spray $S$ is metrizable by a Finsler function of scalar flag
curvature. In each subsection, we show how to construct examples of sprays that are
metrizable by such Finsler functions.

\subsection{Solutions to Hilbert's fourth problem by Finsler functions
  of constant flag curvature} \label{subsec:h4cc}

The projectively flat spray $S$, given by formula \eqref{pfspray}, is
isotropic, the Ricci scalar, $\rho$, and the semi-basic $1$-form
$\alpha$ are given by formulae \eqref{raflat}.
According to \cite[Thm. 4.1]{BM13}, the spray $S$ is metrizable by a Finsler function of
constant flag curvature if and only if the following three conditions are
satisfied:
\begin{itemize}
\item[C1)] $d_J\alpha=0$;
\item[C2)] $d_h\rho=0$;
\item[C3)] $\operatorname{rank}(dd_J\rho)=2n$.
\end{itemize}
We study now the first condition $C1)$. Since the spray $S_0$ is flat, it
follows that $R_0=[h_0,h_0]/2=0$ and therefore $d^2_{h_0}=0$. Using a
Poincar\'e-type Lemma for the differential operator $d_{h_0}$, and formula
\eqref{dja0}, it follows that the condition $C1)$ is satisfied if and only if
there exists a locally defined, $0$-homogeneous, smooth function $g$ on
$\Omega \times \mathbb{R}^n\setminus\{0\}$ such that
\begin{eqnarray}
  d_JP=d_{h_0}g. \label{c1}\end{eqnarray}
From the above formula, by composing with the inner product $i_{S_0}$
to the both sides, we obtain 
\begin{eqnarray}
  P=\mathbb{C}(P)=i_{S_0}d_JP=i_{S_0}d_{h_0}g=S_0(g). \label{psog} \end{eqnarray}
In view of this formula, we obtain that the Ricci scalar, $\rho$, in
formula \eqref{raflat}, can be expressed as follows:
\begin{eqnarray}
  \rho = \left(S_0(g)\right)^2 - S^2_0(g). \label{rhosog} \end{eqnarray} 
Using formula \eqref{dhr0}, as well as the above formulae, we obtain that the second condition $C2)$
is satisfied if and only if 
\begin{eqnarray*}
  d_{h_0}\rho - S_0(g) d_J\rho - 2\rho d_{h_0}g=0. \end{eqnarray*}
We can write above formula, which is equivalent to the condition $C2)$,
as follows
\begin{eqnarray}
  d_{h_0}(\exp(-2g)\rho)+\frac{1}{2}
  S_0(\exp(-2g))d_J\rho=0. \label{c2} \end{eqnarray}
\begin{rem} \label{h4eqcc} Each solution $g$ of the above equation \eqref{c2}
  determines a projectively flat Finsler metric
  $F^2=\left|\left(S_0(g)\right)^2 - S^2_0(g)\right|$, of constant
  flag curvature, if and only if the regularity condition $C3)$ is
  satisfied. \end{rem} Next, we provide some examples of such
functions $g$. 

\subsubsection{Example} \label{ssc:ex1} Consider the open disk $\Omega=\{x\in \mathbb{R}^n, |x|<1\}$, the
function $g(x)=-\ln \sqrt{1-|x|^2}$, and the projectively flat  spray
$S=S_0-2g^c\mathbb{C}\in \mathfrak{X}(\Omega \times
\mathbb{R}^n)$. 
The particular form of the projective factor
$P(x,y)=g^c(x,y)=S_0(g)=y^i \partial g/\partial x^i$ assures that
the the function $g$ is a solution of the equation \eqref{c1}, which
means that the condition $C1)$ is satisfied.

For the spray $S$, the Ricci scalar given by formula
\eqref{rhosog} has the following expression
\begin{eqnarray}
  \rho(x,y)=-\frac{|y|^2(1-|x|^2)+<x,y>^2}{(1-|x|^2)^2}. \label{rhoklein}
\end{eqnarray}
Since the function $g$ is a solution of the equation
\eqref{c2} it follows that the condition $C2)$ is satisfied. 

It remains to check the regularity condition $C3)$. By a
direct computation we have $dd_J\rho = 2g_{ij}\delta y^i \wedge dx^j$,
where 
\begin{eqnarray}
  g_{ij} = \frac{\partial^2 g}{\partial x^i\partial x^j}  - \frac{\partial
    g}{\partial x^i} \frac{\partial g}{\partial x^j} =
  \frac{1}{1-|x|^2}\left(\delta_{ij} +
    \frac{x_ix_j}{1-|x|^2}\right), \end{eqnarray} is the Klein
metric on the unit ball, see \cite[Example
11.3.1]{Shen01}.
Therefore, we have that the projectively flat spray $S$ is the
geodesic spray of the Kein metric, 
\begin{eqnarray} F^2(x,y)=-\rho(x,y)=  \frac{|y|^2(1-|x|^2)+<x,y>^2}{(1-|x|^2)^2}, \label{fklein}
\end{eqnarray}
which has constant flag curvature $\kappa=\rho/F^2=-1$.

\subsubsection{Example} \label{ssc:ex2} If we consider the function
$g(x)=-\ln\sqrt{1+|x|^2}$, solution of equations \eqref{c2}, we obtain that the
spray $S=S_0-2g^c\mathbb{C}\in \mathfrak{X}(\mathbb{R}^n\times
\mathbb{R}^n)$ is metrizable by the following metric on
$\mathbb{R}^n$, 
\begin{eqnarray}
  F^2=S_0(g^c)-(g^c)^2=\frac{|y|^2(1+|x|^2) -
    <x,y>^2}{(1+|x|^2)^2}, \label{fpos} \end{eqnarray}
of constant curvature $\kappa=1$, \cite[Example
11.3.2]{Shen01}.

\subsection{Solutions to Hilbert's fourth problem by Finsler functions
  of scalar flag curvature} \label{subsec:h4sc}

In this subsection, we try to extend the question we
addressed in the previous subsection, from constant flag curvature to
scalar flag curvature. Therefore, we consider a domain $\Omega\subset
\mathbb{R}^n$ and let $S_0\in
\mathfrak{X}(\Omega \times \mathbb{R}^n)$ be the flat spray.
We will provide an example of a projective deformation $S=S_0 -
2P\mathbb{C}$, for a $1$-homogeneous function $P \in C^{\infty}(\Omega
\times \mathbb{R}^n)$, which will lead to a
spray metrizable by Finsler functions of scalar flag curvature. Such 
projectively flat Finsler function, will be therefore a solution to Hilbert's fourth problem. 

As we have seen already, the spray $S=S_0 -
2P\mathbb{C}$ is isotropic, the Ricci scalar, $\rho$, and the semi-basic $1$-form
$\alpha$ are given by  formulae \eqref{raflat}. Since the spray $S$ is isotropic, according to
Theorem \ref{scalar_flag}, it follows that $S$ is Finsler metrizable,
which is equivalent to be metrizable by a Finsler function of scalar
Flag curvature, if and only if the following three conditions are satisfied:
\begin{itemize}
\item[S1)] $d_J(\alpha/\rho)=0$;
\item[S2)] $\mathcal{D}_{hX}(\alpha/\rho)=0$;
\item[S3)] the regularity condition iv) of Theorem \ref{scalar_flag}. 
\end{itemize}
Next we provide an example of a projective factor $P$, which has a
very similar form with those considered in the previous two examples. However, for
the function $P$ in the next example, the projectively flat spray $S$ satisfies the
conditions $S1)$, $S2)$, and $S3)$ and hence will be metrizable by a
Finsler function of scalar flag curvature.  

\subsubsection{Example} \label{ssc:ex3} For the open disk $\Omega=\{x\in
\mathbb{R}^n, |x|<1\}$ in $\mathbb{R}^n$, we consider the function
$g \in C^{\infty}(\Omega \times
(\mathbb{R}^n\setminus\{0\}))$, and the projectively flat  spray
$S\in \mathfrak{X}(\Omega \times
(\mathbb{R}^n\setminus\{0\}))$, given by 
\begin{eqnarray}
  g(x,y)=\ln \sqrt{|y|+<x,y>}, \quad
  S=S_0-2S_0(g)\mathbb{C}=y^i\frac{\partial}{\partial x^i} -
  \frac{|y|^2y^i}{|y|+<x,y>} \frac{\partial}{\partial y^i}. \label{ex3g}
\end{eqnarray} 
The projective factor $P=S_0(g)$ is given by 
\begin{eqnarray*}
  P(x,y)=\frac{1}{2}\frac{|y|^2}{|y|+<x,y>}. \end{eqnarray*}
Using first formula \eqref{raflat} we obtain that the Ricci scalar is
given by 
\begin{eqnarray}
  \rho(x,y)=3P^2(x,y) =
  \frac{3}{2}\frac{|y|^4}{(|y|+<x,y>)^2}. \label{ex3rho} \end{eqnarray}
Using above formula for $\rho$ and the second formula \eqref{raflat} we obtain that the semi-basic
$1$-form $\alpha$ is given by 
\begin{eqnarray}
  \alpha = -3(d_{h_0}P + Pd_JP)= \frac{3|y|^2}{4(|y|+<x,y>)^3}(y_i|y| +
  x_i|y|^2)dx^i. \label{ex3alpha} \end{eqnarray} 
Using formulae \eqref{ex3rho} and \eqref{ex3alpha} it follows that the
semi-basic $1$-form $\alpha/\rho$ is $d_J$-closed, since 
\begin{eqnarray}
  \frac{\alpha}{\rho} = \frac{1}{|y|+<x,y>}\left(\frac{y_i}{|y|} +
    x_i\right)dx^i = d_J f, \quad f(x,y)=\ln(|y|+<x,y>). \label{ex3ar} \end{eqnarray}

From the above formula we have that the first condition $S1)$ is
satisfied. As we have shown in the proof of Theorem \ref{scalar_flag},
the second condition $S2)$ is equivalent to the fact that $d_hf$ is a basic
$1$-form on $\Omega$. Using formula \eqref{hh0} for the horizontal
projector $h$ and expression \eqref{ex3ar} for the function $f$, we
have that $d_hf=0$. The regularity condition $S3)$ is also satisfied
and hence, by formula \eqref{expf}, we obtain that 
\begin{eqnarray}
  F(x,y)=\exp f(x,y)=|y|+<x,y>, \label{ex3finsler} \end{eqnarray} is a
Finsler function. The function $F$ is a Finsler function of Numata
type, see \cite[3.9. B]{BCS00}. The Finsler function $F$ has scalar
flag curvature, which is given by
formula \eqref{krf}, 
\begin{eqnarray}
  \kappa(x,y)=\frac{\rho}{F^2} =
  \frac{3}{4}\frac{|y|^4}{(|y|+<x,y>)^4}. \label{ex3sf}\end{eqnarray}  
The geodesics of the Finsler function $F$, given by formula \eqref{ex3finsler}, are segments of straight
lines in the open disk $\Omega$.  As expected, from the recent result of
\'Alvarez-Paiva in \cite{Alvarez13}, the non-reversible Finsler
function $F$ is the sum of a reversible projective metric and an exact
$1$-form.  

\section{Examples}
\label{sec:examples}

In the previous subsection, we have seen already an example,
\eqref{ssc:ex3}, of a spray that is metrizable by a Finsler function
of scalar flag curvature. We have tested the Finsler metrizability of
this spray using the conditions of Theorem \ref{scalar_flag}. In this
section we will use again the conditions of Theorem \ref{scalar_flag} to
test whether or not some other examples of sprays are Finsler
metrizable. We will also see that the regularity condition iv) of
Theorem \ref{scalar_flag} can be relaxed and we can search for sprays
metrizable by conic pseudo-, or degenerate Finsler functions. 

\subsection{A spray metrizable by a conic pseudo-Finsler function} 

Consider the following affine spray on some domain $M\subset
\mathbb{R}^2$, where two smooth functions $\phi$ and $\psi$ are defined,
\begin{eqnarray*}
  S=y^1\frac{\partial}{\partial x^1} + y^2\frac{\partial}{\partial x^2}
  - \phi(x^1, x^2) (y^1)^2 \frac{\partial}{\partial y^1} - \psi(x^1,
  x^2) (y^2)^2 \frac{\partial}{\partial y^2}. \end{eqnarray*} 
Using formulae \eqref{localphi},  the local components of the
corresponding Jacobi endomorphism  are given by
\begin{eqnarray*}
  R^1_1= -\phi_{x^2}y^1y^2, \quad R^1_2 = \phi_{x^2}(y^1)^2, \quad  R^2_1 =
  \psi_{x^1} (y^2)^2, \quad R^2_2=
  -\psi_{x^1}y^1y^2 . \end{eqnarray*}
According to formula \eqref{ricr}, the Ricci scalar is given by 
\begin{eqnarray*} \rho= R^1_1+R^2_2=-y^1y^2(\phi_{x^2} +
  \psi_{x^1}). \end{eqnarray*}
The case when $\phi_{x^2} =-  \psi_{x^1} \neq 0$ has been studied in
Example 8.2.4 from \cite{Shen01}. In this case, we have that the Ricci
scalar is $\rho=0$ while $\Phi\neq 0$ and hence $S$ is not Finsler metrizable. 

We pay now attention to the case $\rho\neq 0$. In this case, using the four conditions of Theorem \ref{scalar_flag}, we will prove that $S$ is Finsler metrizable if and only if there exists
a constant $c\in \mathbb{R}\setminus\{0,1\}$, such that 
\begin{eqnarray}
  c\phi_{x^2}=(1-c)\psi_{x^1}. \label{exfpc}
\end{eqnarray}

Since $S$ is a spray on a $2$-dimensional manifold, it follows that it is isotropic and hence the first condition of Theorem
\ref{scalar_flag} is satisfied. The two components of the semi-basic
$1$-form $\alpha=\alpha_1dx^1+\alpha_2 dx^2$, which appear in the expression \eqref{isophi} of
the Jacobi endomorphism, are given by \cite[(4.4)]{BM13}:
\begin{eqnarray*}
  \alpha_1=\frac{R^2_2}{y^1}=-\psi_{x^1} y^2, \quad
  \alpha_2=\frac{R^1_1}{y^2}=-\phi_{x^2} y^1. \end{eqnarray*} 
The last three conditions of Theorem \ref{scalar_flag}  refer to the
semi-basic $1$-form $\alpha/\rho$, which is given by
\begin{eqnarray*}
  \frac{\alpha}{\rho} = \frac{\psi_{x^1}}{(\phi_{x^2} +
    \psi_{x^1})y^1}dx^1 + \frac{\phi_{x^2}}{(\phi_{x^2} +
    \psi_{x^1})y^2}dx^2. \label{exfpar} \end{eqnarray*}
For the second condition of Theorem \ref{scalar_flag}, one can
immediately check that $d_J(\alpha/\rho)=0$ and therefore there exists
a function $f$ defined on the conic region $A=\{(x^1,x^2, y^1, y^2)\in
TM, y^1>0, y^2>0\}$ of $T_0M$, such that $\alpha/\rho=d_Jf$. The function $f$ is given by 
\begin{eqnarray}
  f(x,y)=\frac{1}{\phi_{x^2} + \psi_{x^1}}\left( \psi_{x_1}\ln y^1 +
    \phi_{x_2}\ln y^2\right). \label{exfpf} \end{eqnarray}  
For the third condition of Theorem \ref{scalar_flag}, we have to test
if $d_hf$ is a basic $1$-form. For the spray $S$, the local
coefficients $N^i_j$, of the nonlinear connection are given by 
\begin{eqnarray*} N^1_1=\phi y^1, \quad N^1_2=N^2_1=0, \quad
  N^2_2=\psi y^2. \end{eqnarray*} It follows that 
\begin{eqnarray*} 
  d_hf  =  \frac{\delta f}{\delta x^1}dx^1 + \frac{\delta f}{\delta
    x^2}dx^2, \quad 
  \frac{\delta f}{\delta x^1}  =  \frac{\partial f}{\partial x^1} -
  \frac{\phi \psi_{x^1}}{\phi_{x^2} +
    \psi_{x^1}}, \quad \frac{\delta f}{\delta x^2}  =  \frac{\partial f}{\partial x^2} -
  \frac{\psi \phi_{x^2}}{\phi_{x^2} + \psi_{x^1}}.   \end{eqnarray*} 
Therefore $d_hf$ is a basic $1$-form if and only if there exist two
real constant $c_1$ and $c_2$ such that 
\begin{eqnarray}
  \frac{\psi_{x^1}}{\phi_{x^2} + \psi_{x^1}} = c_1, \quad  \frac{\phi_{x^2}}{\phi_{x^2} +
    \psi_{x^1}}=c_2. \label{exfpc12} \end{eqnarray} Expression \eqref{exfpf} and the
condition $\mathbb{C}(f)=1$ implies $c_1+c_2=1$. Formula
\eqref{exfpc12} is equivalent to formula \eqref{exfpc}, for $c=c_1$
and $c_2=1-c$. 

We will show that, within the given assumptions \eqref{exfpc}, the
last condition of Theorem \ref{scalar_flag} is satisfied. We have that 
\begin{eqnarray*}
  \frac{\alpha}{\rho} = \frac{c}{y^1}dx^1 + \frac{1-c}{y^2}dx^2 \end{eqnarray*}
and therefore 
\begin{eqnarray*}
  d\left(\frac{\alpha}{\rho}\right) + 2i_{\mathbb F}
  \frac{\alpha}{\rho}\wedge \frac{\alpha}{\rho} =
  \frac{c(2c-1)}{(y^1)^2} \delta y^1 \wedge dx^1 +
  \frac{c(2-2c)}{y^1y^2}(\delta y^1 \wedge dx^2 + \delta y^2 \wedge
  dx^1) + \frac{(1-c)(1-2c)}{(y^2)^2} \delta y^2\wedge dx^2,
\end{eqnarray*} which is non-degenerate and hence it is a symplectic
form on $A\subset T_0M$. 

We have shown that the spray $S$ is Finsler metrizable if and only if
the condition \eqref{exfpc} is satisfied. We will show now how we can
construct the Finsler function that metricizes the spray. To simplify
the calculations, we choose the constant $c=1/2$ and the functions
$\phi(x^1, x^2)=\psi(x^1, x^2)=2g'(x^1+x^2)/g(x^1+x^2)$, where $g(t)$ is a
non-vanishing smooth function. In this case, one can see that the condition
\eqref{exfpc} is satisfied. 

For this choice we have that the basic $1$-form $d_hf$ is given by  
\begin{eqnarray*} d_hf= - \frac{g'}{g} dx^1 - \frac{g'}{g} dx^2
  = da, \quad a(x^1, x^2) = -\ln g(x^1+x^2). \end{eqnarray*}  
According to formula \eqref{expf}, it follows that 
\begin{eqnarray*}
  F(x,y)=\exp(f-a)=\frac{\sqrt{y^1y^2}}{g(x^1+x^2)}, \end{eqnarray*}
metricizes the spray $S$ for the given choice of the functions $\phi$ and
$\psi$. The scalar flag curvature is given by formula \eqref{krf}, and
for the above Finsler function is 
\begin{eqnarray*}
  \kappa = \frac{\rho}{F^2}= 4(g''g-(g')^2). \end{eqnarray*}
For the particular case, when $g(t)=t/2$ we obtain the case of constant
sectional curvature $\kappa=-1$ studied in \cite[\S 5.4]{BM13}.

\subsection{A spray metrizable by a degenerate Finsler function}
We present now an example of a spray that is metrizable by a
degenerate Finsler function of scalar flag curvature. This means that the first three conditions
of Theorem \ref{scalar_flag}  are satisfied, while the last one it is
not. On $M=\mathbb{R}^2$, consider the following system of second
order ordinary differential equations:
\begin{eqnarray}
  \frac{d^2x^1}{dt^2} + 2\frac{dx^1}{dt}\frac{dx^2}{dt}=0, \quad
  \frac{d^2x^2}{dt^2} - \left(\frac{dx^2}{dt}\right)^2=0. \label{systemdf}
\end{eqnarray}
The corresponding spray $S\in \mathfrak{X}(TM)$ is given by 
\begin{eqnarray*}
  S=y^1\frac{\partial}{\partial x^1} + y^2\frac{\partial}{\partial x^2}
  - 2y^1y^2\frac{\partial}{\partial y^1} +
  (y^2)^2\frac{\partial}{\partial y^2}. \label{spraydf}
\end{eqnarray*}  
Using the formulae \eqref{localphi} and \eqref{ricr}, the local components of the
corresponding Jacobi endomorphism and the Ricci scalar are given by
\begin{eqnarray*}
  R^1_1= -2(y^2)^2, \quad R^2_2= 0, \quad \rho =  -2(y^2)^2.
\end{eqnarray*}
Since $S$ is a two-dimensional spray, it follows that it is isotropic
and hence first condition of Theorem \ref{scalar_flag}  is
satisfied. The semi-basic $1$-form $\alpha/\rho=\alpha_1/\rho dx^2 +
\alpha_2/\rho dx^2$ has the components:
\begin{eqnarray*}
  \frac{\alpha_1}{\rho}=\frac{R^2_2}{y^1\rho}=0, \quad
  \frac{\alpha_2}{\rho}=\frac{R^1_1}{y^2\rho}=\frac{1}{y^2}. \label{ardf}
\end{eqnarray*} 
From the above formulae, one can immediately check that
$d_J(\alpha/\rho)=0$ and hence the second condition of Theorem \ref{scalar_flag}  is
satisfied. Moreover, we have that there exists a function $f\in
C^{\infty}(T_0M)$ such that
\begin{eqnarray*}
  \frac{\alpha}{\rho}=d_Jf, \quad \textrm{for} \ f(x,y)=\ln
  |y^2|. \end{eqnarray*}
Third condition of Theorem \ref{scalar_flag}  is
satisfied if and only if $d_hf$ is a basic $1$-form. By direct
calculation we have that this is true, since 
$d_hf=dx^2$. More than that, for $a(x^1, x^2)=x^2$, we have that
$d_hf=da$.  Therefore the function  
\begin{eqnarray*}
  F(x,y)=\exp(f(x,y)-a(x))=\exp(-x^2)y^2 \end{eqnarray*} is a degenerate
Finsler function that metricizes the given system \eqref{systemdf}.
This degenerate Finsler function has scalar flag curvature, given by formula
\eqref{krf}, which in our case is
\begin{eqnarray*}
  \kappa = \frac{\rho}{F^2}= \frac{-2}{\exp(-x^2)}. \end{eqnarray*}
It can be directly checked that any solution of the system
\eqref{systemdf} is also a solution of the Euler-Lagrange equations
for $F^2$. Some other non-homogeneous Lagrangian functions that
metricizes the system \eqref{systemdf} where determined in \cite[Ex. 7.10]{AT92}. 

\subsection{A spray that is not Finsler metrizable}
We consider now an example of a spray that is not Finsler
metrizable, and this is due to the fact that the third condition
of Theorem \ref{scalar_flag}  is not satisfied. On $M=\mathbb{R}^2$,
we consider the following system of second
order ordinary differential equations:
\begin{eqnarray}
  \frac{d^2x^1}{dt^2} + \left(\frac{dx^1}{dt}\right)^2 + \left(\frac{dx^2}{dt}\right)^2=0, \quad
  \frac{d^2x^2}{dt^2} +4 \frac{dx^1}{dt}\frac{dx^2}{dt}=0. \label{systemnf}
\end{eqnarray}
The above system can be identified with a spray $S\in
\mathfrak{X}(TM)$, which is given by 
\begin{eqnarray*}
  S=y^1\frac{\partial}{\partial x^1} + y^2\frac{\partial}{\partial x^2}
  - \left((y^1)^2+(y^2)^2\right)\frac{\partial}{\partial y^1} -
  4y^1y^2\frac{\partial}{\partial y^2}. \label{spraynf}
\end{eqnarray*}  
We make use of formulae \eqref{localphi} and \eqref{ricr} to compute the local components of the
corresponding Jacobi endomorphism and the Ricci scalar, which are given by
\begin{eqnarray*}
  R^1_1= -(y^2)^2, \quad R^2_2= -2(y^1)^2, \quad \rho =  -2(y^1)^2 - (y^2)^2.
\end{eqnarray*}
Again, the spray $S$ is two-dimensional and hence it is isotropic,
which means that the first condition of Theorem \ref{scalar_flag}  is
satisfied. The other conditions refer to the semi-basic $1$-form $\alpha/\rho=(\alpha_1/\rho) dx^2 +
(\alpha_2/\rho) dx^2$, whose components are given by:
\begin{eqnarray*}
  \frac{\alpha_1}{\rho}=\frac{R^2_2}{y^1\rho}=\frac{2y^1}{2(y^1)^2 + (y^2)^2}, \quad
  \frac{\alpha_2}{\rho}=\frac{R^1_1}{y^2\rho}=\frac{y^2}{2(y^1)^2 + (y^2)^2}. \label{arnf}
\end{eqnarray*} 
From the above formulae, it follows that $d_J(\alpha/\rho)=0$, which
means that the second condition of Theorem \ref{scalar_flag}  is
satisfied. Therefore, there exists a function $f\in
C^{\infty}(T_0M)$ such that
\begin{eqnarray*}
  \frac{\alpha}{\rho}=d_Jf, \quad \textrm{for} \ f(x,y)=\ln(2(y^1)^2 + (y^2)^2). \end{eqnarray*}
For the above considered function $f$ we can check that $d_hf$ is not
a basic $1$-form. It follows then that third condition  of Theorem
\ref{scalar_flag} is not satisfied and consequently the spray is not
Finsler metrizable. The system \eqref{systemnf} has been considered
in \cite[Ex. 7.2]{AT92}, where it has been shown that it is not
metrizable, using different techniques. 

 \subsection*{Acknowledgments} The work of I.B. has been supported by the
 Romanian National Authority for Scientific Research,
 CNCS UEFISCDI, project number
 PN-II-ID-PCE-2012-4-0131. The work of
 Z.M. has been supported by the Hungarian Scientific Research Fund (OTKA) Grant K67617.

\end{document}